\documentclass[11pt]{article}
\usepackage[utf8]{inputenc}
\usepackage[whole]{bxcjkjatype}
\usepackage{amsmath,amssymb,bbm}
\usepackage{graphicx}
\usepackage{mathtools,stmaryrd,newunicodechar}
\usepackage{algorithm,algorithmicx,algpseudocode}
\usepackage{natbib,hyperref}
\usepackage{geometry}
\usepackage{amsthm}
\usepackage{authblk}
\usepackage{fourier}
\usepackage{subcaption}
\usepackage{bbm}
\usepackage{bm}
\usepackage{enumerate}
\usepackage{autobreak}
\usepackage{mdframed}
\usepackage{pdfpages}

\hypersetup{
    colorlinks = true,
    citecolor=blue,
    linkcolor=blue
}

\newcommand{\critset}{\mathcal{C}^{\star}}

\newcommand{\gm}{\overline{\varphi}}

\newcommand{\setH}{\mathsf{H}}
\newcommand{\simplex}{\mathcal{S}^{\circ}}

\newcommand{\curves}{\mathsf{C}}

\newcommand{\zero}{\mathsf{Z}}

\newcommand{\ku}{\kappa\!_u}
\newcommand{\kv}{\kappa\!_v}
\newcommand{\gu}{\gamma\!_u}
\newcommand{\gv}{\gamma\!_v}

\newtheorem{theorem}{Theorem}

\newtheorem{lemma}{Lemma}

\title{On Mixtures of Three Homoscedastic Gaussian Densities: \\ 
An Unconditional Sharper Bound on the Number of Modes}

\author[1,2,3]{Akifumi Okuno\thanks{\url{okuno@ism.ac.jp} (corresponding author)}}
\author[4]{Yutaro Kabata}
\affil[1]{Institute of Statistical Mathematics}
\affil[2]{The Graduate University for Advanced Studies, SOKENDAI}
\affil[3]{RIKEN}
\affil[4]{Kagoshima University}

\begin{document}


\maketitle

\begin{abstract}
It is known that a mixture of three homoscedastic multivariate Gaussian densities whose centers form an equilateral triangle can have four modes, owing to the emergence of a ``ghost'' mode at the center. Nevertheless, obtaining a sharp upper bound on the number of modes remains open even in this seemingly simple setting. 
The best previously available upper bounds applicable to the homoscedastic three-component setting were $42592$ and, more recently, $72$. These bounds were derived for substantially more general classes of Gaussian mixtures and are therefore not optimized for the present setting. Moreover, they are conditional on the assumption that the modal set is finite. In this paper, as a sharper bound, we prove that every mixture of three homoscedastic Gaussian densities has at most $8$ modes, without imposing any finiteness or non-degeneracy assumption a priori. To the best of our knowledge, this is the first unconditional finiteness result for the modal set that holds uniformly over the full class of multivariate three-component homoscedastic Gaussian mixtures. In particular, it covers genuinely multivariate asymmetric configurations with arbitrary non-collinear centers and arbitrary positive mixture weights.
\end{abstract}

\section{Introduction}
Gaussian mixture models are among the most widely used tools in modern statistics, supporting a broad range of applications, including clustering and density estimation~\citep{mclachlan1988mixture,mclachlan2000finite}. These applications often rely on the modal structure of a Gaussian mixture density, where modes are understood as local maxima of the density. In clustering, in particular, each mode is commonly interpreted as representing a distinct cluster or underlying group. Consequently, when the number of mixture components is specified as $k$, it is natural to expect that the number of modes should not exceed $k$.

In the univariate case, the number of modes of a Gaussian mixture with $k$ components is at most $k$, even when the component variances are different~\citep[][Theorem~2]{perpinan2003number}. The proof relies on scale-space theory, namely on the behavior of modes under Gaussian blurring and deblurring. Univariate Gaussian blurring does not increase the number of modes. This monotonicity is special to the univariate setting; in the multivariate case, Gaussian blurring may create new modes. A two-dimensional counterexample was given by \citet{Lifshitz1990multiresolution}, and a local theoretical analysis was later developed by \citet{damon1995local}. As discussed in Section~3.2 of \citet{perpinan2003number}, this failure of scale-space monotonicity is one reason why multivariate mode counting is substantially more difficult.

Indeed, a multivariate Gaussian mixture with $k$ components can exhibit more than $k$ modes. When the component covariance matrices differ, such examples are relatively easy to construct. For instance, if two elongated bivariate Gaussian components are arranged to intersect at a suitable angle, an additional mode can emerge in the region where their tails overlap, yielding three modes from only $k=2$ components; see, for example, Figure~1 of \citet{kabata2025singularities} or \citet{amendola2019maximum}. More generally, \citet{ray2012upper} proved that the maximum number of modes of a two-component Gaussian mixture in $\mathbb{R}^d$ is exactly $d+1$. By contrast, for mixtures with three or more components, several explicit constructions provide lower bounds~\citep{ray2005topography,perpinan2003isotropic,perpinan2003number,edelsbrunner2013isotropic}, but the sharp maximum remains unknown.

General upper bounds were established by \citet{amendola2019maximum}, and were recently improved by \citet{nguyen2026bounds}. These works first bound nondegenerate critical points using Pfaffian or fewnomial techniques~\citep{khovanskii1980class,khovanskii1991fewnomials}, and then obtain mode-count bounds under a finite-modal-set assumption via Morse-theoretic or tilting arguments. Thus, for the purpose of obtaining an unconditional mode bound, the remaining difficulty is not only to sharpen the numerical estimate but also to rule out degenerate or non-isolated critical/modal sets. Moreover, the resulting general-purpose bounds remain rather loose in the special homoscedastic three-component setting.


Because determining the maximum number of modes in full generality appears prohibitively difficult, we focus on the homoscedastic setting~\citep{perpinan2003isotropic,edelsbrunner2013isotropic}, where all components share a common covariance matrix. Besides being considerably more tractable mathematically, homoscedastic Gaussian mixtures form an important parsimonious model in statistical practice, requiring substantially fewer parameters than general heteroscedastic mixtures.
More specifically, we study three-component mixtures. 
In this setting, the general upper bounds of \citet{amendola2019maximum} and \citet{nguyen2026bounds} give the finite-modal-set bounds $42592$ and $72$, respectively. These results do not by themselves establish finiteness of the modal set for the mixture under consideration. We prove this finiteness directly and derive a substantially sharper unconditional upper bound of $8$ modes (Theorem~\ref{theo:bound_modes}).

The remainder of this paper is organized as follows. Section~\ref{sec:problem} presents the formal problem setting and reviews existing bounds on the number of modes. Section~\ref{sec:number_of_modes} derives a sharper upper bound under the assumption-free setting. 
Section~\ref{sec:conclusion} concludes this paper.

\section{The Homoscedastic Gaussian Mixture Mode Problem}
\label{sec:problem}

Let $d \in \mathbb{N}$ denote the dimension, and let $k \in \mathbb{N}$ denote the number of Gaussian components. In this section, we allow general $k$ in order to review the historical background, although the main results of this paper concern the case $k=3$. 
Let $\bm{\mu}_1,\ldots,\bm{\mu}_k \in \mathbb{R}^d$ be the component centers. Let $w_1,\ldots,w_k$ be arbitrary non-negative weights satisfying $\sum_j w_j=1$. 
For a homoscedastic Gaussian mixture with common covariance matrix $\bm{\Sigma}$, the change of variables $\tilde{\bm{x}}=\bm{\Sigma}^{-1/2}\bm{x}$ and $\tilde{\bm{\mu}}_j=\bm{\Sigma}^{-1/2}\bm{\mu}_j$ gives a one-to-one correspondence between the modes of the original mixture and those of the standardized mixture. Hence, without loss of generality, we take $\bm{\Sigma}=\bm{I}$. With the standard $d$-variate Gaussian density $\varphi$, this study focuses on the maximum number of modes in its mixture
\[
\gm(\bm{x}) = \sum_j w_j \varphi_j(\bm{x}),
\quad
\varphi_j(\bm{x}) = \varphi(\bm{x} - \bm{\mu}_j).
\]

Throughout this paper, the term \emph{mode} refers to a local maximizer of $\gm$. If the set of modes is not finite (for example, if $\gm$ possesses a non-isolated mode), we define the number of modes to be infinite. 
Existing results on lower and upper bounds for the maximum number of modes relevant to this problem are reviewed in Sections~\ref{subsec:lower} and~\ref{subsec:upper}, respectively.

\subsection{Existing lower bounds for homoscedastic mixtures} 
\label{subsec:lower}
\citet{perpinan2003isotropic} describe a private communication in which J.~J.~Duistermaat provided an example showing that $\gm(\bm{x})$ with $k=3$ Gaussian components, arranged at the vertices of an equilateral triangle, can produce $4$ modes. Also see Example 2.1.4 of \citet{amendola2017algebraic} or Figure~3 of \citet{amendola2019maximum}. 

Along the same lines, the equilateral triangle~(i.e., $2$-simplex) can be generalized to $(k-1)$-simplex. \citet{edelsbrunner2013isotropic} studied mixtures of homoscedastic and isotropic $k$ Gaussian components whose centers form a $(k-1)$-simplex embedded in an affine subspace of dimension $k-1$ with equal weights $w_1=w_2=\cdots=w_k=1/k$, and they showed that such highly symmetric configurations can have $k+1$ modes. Using this construction, \citet{edelsbrunner2013isotropic} further proved that $k=3^m$ ($m \in \mathbb{N}$) components mixture in high dimension can contain approximately $k^{1.262}$ modes. 
Although \citet{amendola2019maximum} and \citet{nguyen2026bounds} also discuss lower bounds in more general settings, we do not pursue them further here, as lower-bound constructions are not the main focus of this paper.

\subsection{Existing upper bounds for homoscedastic mixtures} 
\label{subsec:upper}

\citet{ray2012upper} proves that $2$-component Gaussian mixture in $\mathbb{R}^d$ has at most $d+1$ modes. For more than $2$ components, Section~3.2 and Corollary~5.3 of \citet{wallace2013critical} proves that the modal set is finite under strong symmetry assumptions; the mixture there is restricted to equally weighted and isotropic, and its component centers $\bm{\mu}_1,\bm{\mu}_2,\ldots,\bm{\mu}_k$ are located at the vertices of a regular $(k-1)$-simplex, as in \citet{edelsbrunner2013isotropic}. Moreover, Corollary~5.7 of \citet{wallace2013critical} establishes finiteness for proportional-covariance mixtures whose component centers are collinear. This case is essentially univariate and will be treated as a degenerate case in Section~\ref{subsec:degeneracy} in our setting.

Turning from qualitative finiteness to quantitative upper bounds, \citet{amendola2019maximum} Theorem~10 proves that the maximum number of modes in the $d$-variate Gaussian mixture of $k$ components is upper bounded by $2^{d+\binom{k}{2}}(5+3d)^k$, by \emph{assuming} that every mixture of $k$ Gaussians in $\mathbb{R}^d$ has finitely many modes. For a mixture of three Gaussian densities, the upper bound in our setting is evaluated as
$2^{d+3}(5+3d)^{3}$, which is no less than $2^{2+3}(5+3 \cdot 2)^{3}=42592$. 
\citet{nguyen2026bounds} refined the bounds of \citet{amendola2019maximum} for general settings. In particular, \citet{nguyen2026bounds} derived sharper upper bounds for homoscedastic Gaussian mixtures. Substituting our setting, corresponding to $r=2$ and $k=3$, into Corollary~3.20 yields the upper bound $72$, under a priori finite-modal-set assumption. 

Although no formal proof is known, B.~Sturmfels conjectured that the maximum number of modes of a general Gaussian mixture is $\binom{d+k-1}{d}$, which equals $6$ when $d=2$ and $k=3$; see \citet{amendola2019maximum} for historical context. This conjecture concerns the fully general setting, including heteroscedastic mixtures. By contrast, in the homoscedastic setting considered here, the largest number of modes currently known for three-component mixtures is $4$, attained by the classical equilateral-triangle construction of \citet{perpinan2003isotropic}. To the best of our knowledge, no construction with $5$ or more modes has been found in the homoscedastic three-component setting.

\section{Bounding the Number of Modes for Three-Component Mixture}
\label{sec:number_of_modes}

The mode-count bounds discussed above are conditional on the finiteness of the modal set, while the underlying critical-point bounds count non-degenerate critical points. In this section, we prove a sharper upper bound without imposing either a finiteness assumption or a non-degeneracy assumption.

We first state our theorem regarding the number of critical points.

\begin{theorem}
\label{theo:bound_critical_points}
Let $d\ge 1$ and $k=3$. Let $\bm{\mu}_1,\bm{\mu}_2,\bm{\mu}_3\in\mathbb{R}^d$ be arbitrary centers, and let $w_1,w_2,w_3$ be arbitrary non-negative weights satisfying $\sum_{j} w_j=1$. Then the Gaussian mixture density $\gm$ has at most $15$ critical points. 
In particular, every critical point is isolated. 
Non-degeneracy assumption on the critical points is not imposed a priori.
\end{theorem}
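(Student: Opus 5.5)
The plan is to reduce the critical-point equation to a two-variable system on the open $2$-simplex that contains logarithms, and then count its solutions with a Khovanskii--Rolle type argument. First I dispose of the degenerate cases. If some $w_j=0$, the mixture has at most two components. If the centers are collinear, the gradient equation reduces to one dimension. In both cases every critical point lies on a line, where the one-variable count is classical: a univariate $k$-component mixture has at most $2k-1\le 5$ critical points (argue via sign changes of $\sum_j w_j\varphi_j(x)(x-\mu_j)$). So from now on assume all $w_j>0$ and non-collinear centers.

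Setting $\nabla\gm=\bm 0$ gives $\bm x=\sum_j\alpha_j\bm\mu_j$ with $\alpha_j=w_j\varphi_j(\bm x)/\gm(\bm x)$. Hence every critical point lies in the relative interior of the triangle, and the dimension $d$ plays no role. Because the centers are affinely independent, the map $\bm x\mapsto\bm\alpha$ is a bijection between critical points and points $\bm\alpha\in\simplex$ (the open simplex) satisfying the self-consistency conditions. Taking logarithms, $\log(\alpha_i/\alpha_3)=\log(w_i/w_3)-\tfrac12(\|\bm x-\bm\mu_i\|^2-\|\bm x-\bm\mu_3\|^2)$. The right-hand side is \emph{affine} in $\bm x$, hence affine in $(u,v)=(\alpha_1,\alpha_2)$. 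The critical points are therefore in bijection with the solutions in $\{u,v>0,\,u+v<1\}$ of
\[
F_1(u,v)=\log u-\log(1-u-v)-\ell_1(u,v)=0,\qquad F_2(u,v)=\log v-\log(1-u-v)-\ell_2(u,v)=0,
\]
with $\ell_1,\ell_2$ affine. Moreover, critical points are isolated exactly when the solutions of this system are isolated.

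Next I count the solutions via Khovanskii--Rolle. Let $\Gamma=\{F_1=0\}$. Since $\partial_u F_1$ has the form $1/u+1/(1-u-v)-\text{const}$, the set $\Gamma$ is a smooth analytic curve away from an algebraic set, and its number of components is controlled by the same method one dimension down. Along $\Gamma$, between two consecutive zeros of $F_2$ there is a zero of the derivative of $F_2$ along $\Gamma$. This derivative is the Jacobian $J=\det\partial(F_1,F_2)/\partial(u,v)$, and $uv(1-u-v)J$ is a polynomial $P$ of degree at most $3$. Thus
\[
\#\{F_1=F_2=0\}\le \#\{F_1=0,\ P=0\}+\#(\text{non-compact components of }\Gamma).
\]
I then repeat the argument on $\{P=0\}$, a cubic curve. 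On each of its branches $F_1$ restricts to a function whose derivative is rational, so its zeros are bounded by B\'ezout-type counts plus boundary and component contributions. The bookkeeping should give roughly $3\cdot 4$ from the second step plus at most $3$ from the ends of $\Gamma$ (one for each side of the simplex), which totals $15$. Getting this arithmetic exactly right, and controlling the boundary behaviour of $\Gamma$ near the edges and corners where the logarithms blow up, is the main obstacle. There I would use the explicit asymptotics $\log u\to-\infty$ to show that $\Gamma$ exits through at most three boundary pieces.

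Finally, isolation must be proved without assuming non-degeneracy, because Khovanskii--Rolle by itself needs transversality. Suppose some solution were non-isolated. By analyticity the solution set would contain an analytic arc, along which $J\equiv0$. The arc would then lie in the cubic $P=0$. Parametrize a branch of that cubic algebraically and restrict $F_1$ to it: $F_1$ would vanish identically there, which gives an identity $\log u-\log(1-u-v)=\ell_1$ between a logarithm and an algebraic function. This is impossible, by comparing the logarithmic singularity where the branch meets $u=0$ or $u+v=1$, or by the monodromy of $\log$, unless the branch is degenerate, i.e.\ contained in a line such as $u=1-u-v$. I would handle that last case by direct substitution, where the system collapses to a univariate equation, and conclude that the solution set is finite. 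Once finiteness holds, any degenerate solutions can be removed by a small perturbation of the constants in $\ell_1,\ell_2$ that keeps nearby solutions in the count. This upgrades the transversal bound to the general bound of $15$.
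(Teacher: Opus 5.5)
\paragraph{Main gap: the count of 15 is not derived.} The number $15$ is the whole content of the theorem, and your proposal leaves it at ``the bookkeeping should give roughly''. Written out, your two applications of Khovanskii--Rolle give
\[
\#\{F_1=F_2=0\}\;\le\;\#\{uvw\,J=0,\ R=0\}\;+\;n_{\mathrm{cub}}\;+\;n_\Gamma .
\]
Here $R$ is the quartic obtained by clearing denominators in the derivative of $F_1$ along the cubic. The terms $n_{\mathrm{cub}}$ and $n_\Gamma$ are the numbers of non-compact branches of the cubic and of $\Gamma$ in the open simplex.

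Your arithmetic $3\cdot 4+3$ drops $n_{\mathrm{cub}}$. The polynomial $uvw\,J$ equals $1$ at the vertices and is quadratic along each edge. So the cubic meets the boundary in at most six points and can have up to three arcs; this really occurs for well-separated centers. Even granting your unproved $n_\Gamma\le 3$, the route therefore yields $18$. Reaching $15$ would require showing that several of the $12$ B\'ezout points lie outside the open simplex, which you do not attempt.

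The inequality also presupposes three things: $\Gamma$ is smooth, the cubic is smooth in the simplex, and $uvw\,J$ and $R$ share no component. These can fail for special parameters. For instance, $\Gamma$ (the analogue of $\curves_P$) acquires a crossing point exactly when $N(s_{\dagger})=0$ and $N'(s_{\dagger})<0$ in the paper's notation. None of these configurations is treated.

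\paragraph{The perturbation fallback.} Rolle-type arguments need smooth curves, not nondegenerate solutions. But the claim that a small perturbation of the constants in $\ell_1,\ell_2$ ``keeps nearby solutions in the count'' is false in general. A solution of local index $0$, such as a fold as for $(x,y)\mapsto(x^2,y)$, disappears under perturbations on one side. Several such solutions may require incompatible sides, so passing from generic to special parameters needs a real argument. Your isolation argument, via the transcendence of $\log$ on an algebraic arc, is sound in outline, but finiteness alone does not give $15$.

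\paragraph{The missing idea.} The paper avoids all of this two-dimensional topology by an elimination:
\begin{itemize}
\item In the log-ratio coordinates $(s,t)$, the first equation is affine in $e^t$. Off the line $s=s_{\dagger}$ it reads $t=\log\{N(s)/D(s)\}$ on a set $U$ with at most two components.
\item Substituting $e^t$ into $a_2(s)Q(s,t)$ leaves an equation \emph{linear} in $t$, namely $L(s)t+M(s)=0$, with $L\ne0$ on $U$.
\item The critical points are therefore the zeros of the single function $F=\log(N/D)+M/L$. Moreover, $F'\,NDL^2$ is an exponential polynomial $\sum_{j=0}^3 g_j(s)e^{js}$ with $\deg g_j\le 2,3,3,2$.
\item Lemma~\ref{lem:exp_zeros} gives at most $13$ zeros counted with multiplicity, and Rolle's theorem on two components gives $15$.
\item Since $g_0\not\equiv0$, finiteness follows with no nondegeneracy assumption and no perturbation.
\end{itemize}
The line $s=s_{\dagger}$ is handled separately.
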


If we additionally assume that $\gm$ is a Morse function~\citep{milnor1963morse}, i.e., Hessians of all the critical points are non-degenerate, then Proposition~3.4 of \citet{nguyen2026bounds} gives that the number of modes for the Morse Gaussian mixture is no more than $(15+1)/2=8$. The Morse assumption can be removed using the finite-mode transfer result of Proposition~3.7 therein. Let $\mathcal{G}$ denote the class of homoscedastic Gaussian mixture densities with at most three effective components. This class is closed under normalized exponential tilting, since $\exp(\bm{c}^{\top}\bm{x})\varphi(\bm{x}-\bm{\mu}_j) =\exp(\bm{c}^{\top}\bm{\mu}_j+\|\bm{c}\|^2/2) \varphi(\bm{x}-(\bm{\mu}_j+\bm{c}))$; thus, normalization shifts all centers by $\bm{c}$ and changes the weights proportionally to $w_j\exp(\bm{c}^{\top}\bm{\mu}_j)$.

By Theorem~\ref{theo:bound_critical_points}, every density in $\mathcal{G}$ has at most $15$ non-degenerate critical points, and the modal set of $\gm$ is finite. Proposition~3.7 of \citet{nguyen2026bounds} therefore applies with $U=15$ and yields at most $(15+1)/2=8$ modes, proving Theorem~\ref{theo:bound_modes}.

\begin{theorem}
\label{theo:bound_modes}
Let the notation and assumptions be as in
Theorem~\ref{theo:bound_critical_points}. Then $\gm$ has at most $8$
modes.
\end{theorem}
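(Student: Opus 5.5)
The plan is to derive Theorem~\ref{theo:bound_modes} from Theorem~\ref{theo:bound_critical_points} through a Morse-theoretic counting argument that does not need a prior finiteness assumption. Take Theorem~\ref{theo:bound_critical_points} as given. It says that every $\gm$ in our class, and every normalized exponential tilt of it (which stays in $\mathcal{G}$ by the tilting identity above), has at most $15$ critical points, all of them isolated. Modes are critical points, so the modal set of $\gm$ is finite with at most $15$ elements. This removes the conditional hypothesis used in \citet{amendola2019maximum} and \citet{nguyen2026bounds}.

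Next I reduce to the Morse case by a tilt. Suppose $\gm$ has $m$ modes $\bm{x}_1,\dots,\bm{x}_m$; they are finite and isolated, so each is a strict local maximizer. Around each $\bm{x}_i$ I choose a small closed ball $B_i$ on which $\bm{x}_i$ is the unique maximizer of $\log\gm$ and on whose boundary $\log\gm$ is strictly below its value at $\bm{x}_i$. For a generic small $\bm{c}$, the function $\log\gm(\bm{x})+\bm{c}^\top\bm{x}$ is Morse, by Sard's theorem applied to $\nabla\log\gm$. For $\|\bm{c}\|$ small enough it still attains an interior maximum in each $B_i$, which is a nondegenerate local maximum. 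So the tilted density $\exp(\bm{c}^\top\bm{x})\gm(\bm{x})$, normalized, is a Morse member of $\mathcal{G}$ with at least $m$ modes and at most $15$ critical points. This is the content of the transfer result Proposition~3.7 of \citet{nguyen2026bounds}.

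For the Morse tilted density $g$, I count by indices using the Morse inequalities / Poincar\'e--Hopf on $\mathbb{R}^d$. Since $g$ decays like a Gaussian, $-\log g$ is coercive and the gradient of $-\log g$ points outward on large spheres. Hence the sum of $(-1)^{\mathrm{index}}$ over the critical points equals $\chi(\mathbb{R}^d)=1$ in the appropriate sign convention. More usefully, a mountain-pass argument gives at least $M-1$ saddles of index $1$ for $-\log g$ joining $M$ minima; this is Proposition~3.4 of \citet{nguyen2026bounds}. Thus $M+(M-1)\le 15$, so $M\le 8$, and therefore $m\le M\le 8$.

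The main obstacle is the degenerate reduction. I have to make sure that the isolation of the modes supplied by Theorem~\ref{theo:bound_critical_points} is enough for each mode to survive the perturbation, and that the tilted function still has at most $15$ critical points. The latter follows because Theorem~\ref{theo:bound_critical_points} holds uniformly over $\mathcal{G}$, including weight configurations in which some weights become zero. I would handle the survival step with the ball construction above, using the strict boundary gap and uniform convergence of the tilt on compacts.
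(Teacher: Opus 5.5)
Your proposal is correct and follows the same route as the paper. Theorem~\ref{theo:bound_critical_points}, applied uniformly to the tilting-closed class $\mathcal{G}$, gives finiteness of the modal set and the critical-point bound $U=15$; the paper then cites Propositions~3.7 and~3.4 of \citet{nguyen2026bounds} to conclude $(15+1)/2=8$, whereas you unpack those two results yourself (a small generic tilt via Sard yields a Morse member of $\mathcal{G}$ that keeps all $m$ modes, and the Morse inequalities give $M+(M-1)\le 15$).
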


To the best of our knowledge, this is the first unconditional finiteness result for the modal set that holds uniformly over the full class of multivariate three-component homoscedastic Gaussian mixtures. It covers genuinely multivariate asymmetric configurations with arbitrary non-collinear centers and arbitrary positive mixture weights.

The remainder of this section is devoted to the proof of Theorem~\ref{theo:bound_critical_points}. Section~\ref{sec:number_of_zeros_in_exponential_polynomials} provides an analytic tool for counting zeros of exponential polynomials. Using this tool, we first handle the degenerate cases in Section~\ref{subsec:degeneracy}. Section~\ref{subsec:estimating_equation} then derives the equations used to identify critical points, and Section~\ref{subsec:recasting_problems} reformulates the problem as one of counting intersections of two curves. Finally, Section~\ref{subsec:counting_the_number_of_intersections} bounds the number of such intersections, thereby proving Theorem~\ref{theo:bound_critical_points}.

\subsection{Zeros of exponential polynomials}
\label{sec:number_of_zeros_in_exponential_polynomials}

In this section, we present an analytic tool that is essential for our proof. The following lemma gives an upper bound on the number of zeros of exponential polynomials.

\begin{lemma}
\label{lem:exp_zeros}
Let $\lambda_0<\cdots<\lambda_n$ be distinct real numbers, and let $p_0,\ldots,p_n$ be real polynomials, not all identically zero. 
$\deg p_j$ denotes the degree of the polynomial $p_j$, and we use the convention \(\deg 0=-1\). 
Then the number of real zeros, counted with multiplicity, of $f(s)=\sum_{j=0}^n p_j(s) \exp(\lambda_j s)$ is at most $\sum_{j=0}^n(\deg p_j+1)-1$. Moreover, every real zero of $f$ is isolated.
\end{lemma}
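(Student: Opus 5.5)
We will prove Lemma~\ref{lem:exp_zeros} by the classical Pólya--Szegő argument: induct on $N:=\sum_{j=0}^n(\deg p_j+1)$, repeatedly multiplying by an exponential and differentiating, and combine this with Rolle's theorem counted with multiplicity. First we discard the terms with $p_j\equiv 0$ and relabel. Since $p_j\equiv 0$ contributes $0$ to $N$, this changes neither $f$ nor the bound. We may therefore assume every $p_j$ is nonzero, so $N\ge n+1\ge 1$. We also need $f\not\equiv 0$. This follows from linear independence of the functions $s^m e^{\lambda_j s}$ over distinct $\lambda_j$: as $s\to+\infty$ the term with the largest $\lambda_n$ and the highest power of $s$ dominates, so $f(s)\sim c\,s^{\deg p_n}e^{\lambda_n s}$ with $c\neq 0$. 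Because $f$ is real-analytic on $\mathbb{R}$ and not identically zero, its zeros are isolated and each has finite multiplicity. This already gives the last assertion of the lemma, and it makes counting with multiplicity meaningful.

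For the counting statement we induct on $N$. Base case $N=1$: then $n=0$ and $p_0$ is a nonzero constant, so $f=p_0e^{\lambda_0 s}$ has no zeros, and $0=N-1$. Inductive step: let $g(s)=e^{-\lambda_0 s}f(s)=p_0(s)+\sum_{j\ge1}p_j(s)e^{(\lambda_j-\lambda_0)s}$. Multiplying by the nonvanishing factor $e^{-\lambda_0 s}$ does not change the zeros or their multiplicities. Now differentiate $g$ exactly $\deg p_0+1$ times. This kills $p_0$. For $j\ge1$ it replaces $p_j$ by a polynomial of the same degree, because $\bigl(q\,e^{\mu s}\bigr)'=(\mu q+q')e^{\mu s}$ and $\mu=\lambda_j-\lambda_0\neq0$ preserves the leading coefficient up to the factor $\mu$. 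The resulting function $h=g^{(\deg p_0+1)}$ is therefore an exponential polynomial with exponents $\lambda_1-\lambda_0<\cdots<\lambda_n-\lambda_0$, nonzero polynomial coefficients, and parameter $N'=N-(\deg p_0+1)$. When $n\ge1$ we have $N'\ge1$, so by the induction hypothesis $h$ has at most $N'-1$ real zeros counted with multiplicity. When $n=0$, $h\equiv0$ and we instead use that $g=p_0$ has at most $\deg p_0=N-1$ zeros.

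It remains to show that one differentiation loses at most one zero: if a smooth, not identically zero, real-analytic $u$ has $Z$ zeros counted with multiplicity, then $u'$ has at least $Z-1$. To see this, take the distinct zeros $a_1<\dots<a_r$ with multiplicities $m_i$. Each $a_i$ is a zero of $u'$ of multiplicity $m_i-1$. Rolle's theorem supplies a further zero of $u'$ strictly inside each $(a_i,a_{i+1})$, and these points are distinct from the $a_i$. This gives $\sum(m_i-1)+(r-1)=Z-1$ zeros of $u'$. Applying this fact $\deg p_0+1$ times, $Z(g)\le Z(h)+\deg p_0+1\le N'-1+\deg p_0+1=N-1$.

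We expect the main care to be needed on two points. The first is that the derivatives remain not identically zero, so that multiplicities stay finite; this holds because $h$ has nonzero coefficients and the independence argument above applies to it. The second is that when $Z=\infty$ the inequalities need to be read properly; the isolation and finite-multiplicity facts from analyticity rule this out on any bounded interval. To avoid subtleties with infinitely many zeros on all of $\mathbb{R}$, we will run the argument on any finite collection of zeros, which bounds every finite subcount by $N-1$ and hence the total.
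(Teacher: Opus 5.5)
Your proof is correct and follows essentially the same route as the paper: multiply by $e^{-\lambda_0 s}$, differentiate $\deg p_0+1$ times to annihilate $p_0$ while preserving the degrees of the other coefficients, and combine Rolle's theorem (counted with multiplicity, applied to finite subcollections of zeros) with induction. The differences are minor: you induct on $\sum_j(\deg p_j+1)$ rather than on the number of exponential terms, and you explicitly justify $f\not\equiv 0$ via the dominant term $s^{\deg p_n}e^{\lambda_n s}$, whereas the paper only asserts this.
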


The zero-counting result is covered by the general theory of extended complete Tchebycheff systems~\citep[ECT-systems;][]{karlin1966tchebycheff} and by disconjugacy theory for linear differential equations~\citep{coppel1971disconjugacy}. Since our purpose is limited to counting real zeros of this particular class of exponential polynomials, we instead give a short elementary proof that makes the precise degree-dependent bound explicit.

\begin{proof}[Proof of Lemma~\ref{lem:exp_zeros}]
Since $f$ is real analytic and not identically zero, every real zero of $f$ is isolated by the identity theorem. 
By the convention \(\deg 0=-1\), an identically zero function contributes nothing to the right-hand side. Hence we may delete all identically zero terms and relabel the remaining terms. Thus, in the proof below, we may assume without loss of generality that all \(p_j\) are nonzero.

We induct on the number $n+1$ of exponential terms. 
\begin{itemize}
\item When $n=0$, the
zeros of $p_0(s)e^{\lambda_0s}$ are precisely the zeros of $p_0$, so
the assertion is immediate.
\item Suppose that $n\ge1$. Multiplying by $e^{-\lambda_0s}$ does not change
the zeros or their multiplicities, so set
$h(s)=e^{-\lambda_0s}f(s)=p_0(s)+\sum_{j=1}^n p_j(s)e^{\delta_j s}$,
where $\delta_j=\lambda_j-\lambda_0>0$. Let
$d_0=\deg p_0+1$. Differentiating $h$ exactly $d_0$ times annihilates
$p_0$ and gives
$h^{(d_0)}(s)=\sum_{j=1}^n q_j(s)e^{\delta_j s}$, where
$q_j=(d/ds+\delta_j)^{d_0}p_j$. Since $\delta_j\ne0$, the leading
coefficient of $q_j$ is that of $p_j$ multiplied by
$\delta_j^{d_0}$, and hence $\deg q_j=\deg p_j$. 
Write $\zero(u)$ for the number of real zeros of $u$, counted
with multiplicity. Repeated application of Rolle's theorem gives
$\zero(h)\le \zero(h^{(d_0)})+d_0$; indeed, a zero of
multiplicity $m$ produces a zero of multiplicity at least $m-1$ in the
derivative, and there is an additional zero of the derivative between
each pair of consecutive distinct zeros. The induction hypothesis
therefore yields
\[
\zero(f)
\, = \,
\zero(h)
\, \le \,
\zero(h^{(d_0)})+d_0
\, \le \,
\left(\sum_{j=1}^n(\deg q_j+1)-1\right)+d_0
\, = \,
\sum_{j=0}^n(\deg p_j+1)-1.
\]
\end{itemize}
More precisely, the same inequality applies to any finite collection of zeros of $h$; hence, if $h$ had more than $\zero(h^{(d_0)})+d_0$ zeros, one could select a finite subcollection contradicting Rolle's theorem. 
Therefore, the assertion is proved.
\end{proof}

We first illustrate the usefulness of the analytic tool. Lemma~\ref{lem:exp_zeros} immediately implies the following result.

\begin{lemma}
\label{lem:critical_univariate}
Every critical point of a univariate homoscedastic Gaussian mixture with three components is isolated. Moreover, such a mixture has at most $5$ critical points.
\end{lemma}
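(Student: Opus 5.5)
The plan is to write the derivative of the univariate mixture explicitly and recognize it as an exponential polynomial covered by Lemma~\ref{lem:exp_zeros}. With $d=1$ and $\bm{\Sigma}=\bm{I}$, so $\varphi(x)=(2\pi)^{-1/2}e^{-x^2/2}$, we have
\[
\gm'(x) = -\sum_{j=1}^3 w_j (x-\mu_j)\varphi(x-\mu_j)
= -(2\pi)^{-1/2} e^{-x^2/2}\sum_{j=1}^3 w_j e^{-\mu_j^2/2}\,(x-\mu_j)\, e^{\mu_j x}.
\]
The prefactor $-(2\pi)^{-1/2}e^{-x^2/2}$ never vanishes. Hence the critical points of $\gm$, with their multiplicities, are exactly the real zeros of
\[
f(x)=\sum_{j=1}^3 p_j(x)e^{\mu_j x},\qquad p_j(x)=w_j e^{-\mu_j^2/2}(x-\mu_j).
\]

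Before applying the lemma, I will merge coinciding centers, since Lemma~\ref{lem:exp_zeros} requires distinct exponents. If two centers coincide, I combine their terms. Components with $w_j=0$ give $p_j\equiv 0$, and the convention $\deg 0=-1$ handles them automatically. After merging, the exponents $\lambda_j$ are the distinct values among the $\mu_j$ with positive total weight.

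Each remaining polynomial is $W(x-\lambda)$ with $W>0$, so it has degree exactly $1$ and is not identically zero. Since the weights sum to $1$, at least one term survives, and therefore $f$ is not identically zero.

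With $n+1\le 3$ distinct exponents, Lemma~\ref{lem:exp_zeros} bounds the number of real zeros, counted with multiplicity, by $\sum_j(1+1)-1\le 2\cdot 3-1=5$. The same lemma gives that every zero is isolated. This proves both claims.

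The only delicate step is the bookkeeping for degenerate configurations: coincident centers and zero weights. The merging argument above covers it, and in those cases the bound becomes $3$ or $1$, which is consistent with the claim.
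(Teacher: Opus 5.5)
Your proposal is correct and follows the paper's proof. You factor the nonvanishing Gaussian term out of $\gm'$, recognize an exponential polynomial with at most three distinct exponents and affine coefficients, and apply Lemma~\ref{lem:exp_zeros} to get both isolation and the bound $3\cdot 2-1=5$. Your explicit merging of coincident centers and dropping of zero weights spells out what the paper covers only briefly with the phrase ``at most three distinct exponential terms.''
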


\begin{proof}
Since $\gm'(x)=-\sum_j w_j (x-\mu_j)\varphi(x-\mu_j)$, the critical points of $\gm$ are precisely the real zeros of $f(s)=-\exp(-s^2/2)\sum_j w_j(s-\mu_j)\exp(-\mu_j^2/2)\exp(\mu_j s)$ after removing a constant factor. Since $\exp(-s^2/2) \ne 0$ for all $s$, 
the function $f$ is an exponential polynomial with at most three distinct exponential terms $p_j(s)=w_j\exp(-\mu_j^2/2)(s-\mu_j)$ ($j=1,2,3$); each polynomial is of degree at most one. Hence Lemma~\ref{lem:exp_zeros} gives
$\#\{x \in \mathbb{R} \mid \gm'(x)=0\} = \#\{s \in \mathbb{R} \mid f(s)=0\} \le 3(1+1)-1=5$.
The assertion follows.
\end{proof}

\subsection{Excluding degenerate cases}
\label{subsec:degeneracy}

We now return to the proof of Theorem~\ref{theo:bound_critical_points}. 

If \(d=1\), the assertion follows directly from Lemma~\ref{lem:critical_univariate}. Hence assume \(d\ge2\). Remove zero-weight components and merge components with coincident centers. If only one positive effective component remains, the density is a single Gaussian and has exactly one critical point. Otherwise, if the positive effective centers are collinear, then after a translation and an orthogonal change of coordinates they can be written as $\bm{\mu}_j=(m_j,0,\ldots,0)$. 
Writing $\bm{x}=(y,\bm{z})$, we have
\[
\gm(y,\bm{z})=\varphi^{(d-1)}(\bm{z})h(y),
\quad
h(y)=\sum_j \widetilde w_j\varphi^{(1)}(y-m_j).
\]
$\varphi^{(d-1)},\varphi^{(1)}$ denote the $(d-1)$-variate and univariate standard Gaussian densities, respectively. Since 
\[
\frac{\partial \gm(y,\bm{z})}{\partial x}
= 
\left(\varphi^{(d-1)}(\bm{z})h'(y) \, , \, -\bm{z}\varphi^{(d-1)}(\bm{z})h(y) \right),
\]
and since $h(y)>0$ and $\varphi^{(d-1)}(\bm{z})>0$, the critical points of $\gm$ are exactly the points $(y,\bm{0})$ with $h'(y)=0$. Therefore this case is reduced to a univariate homoscedastic Gaussian mixture
with at most three components, and Lemma~\ref{lem:critical_univariate} gives
at most $5$ critical points.

Therefore, it remains to consider the genuinely three-component and genuinely multivariate case. Throughout the proof below, we assume $d\ge 2$, 
\begin{enumerate}[{\quad (C1)}]
\item the three centers $\bm{\mu}_1,\bm{\mu}_2,\bm{\mu}_3$ are distinct and not collinear, and
\item the mixture weights $w_1,w_2,w_3$ are strictly positive (i.e., nonzero).
\end{enumerate}

\subsection{Critical-point equations}

\label{subsec:estimating_equation}

To identify the critical points in the Gaussian mixture $\gm(\bm{x})$, we first derive the critical-point equations that they must satisfy. 

Since the modes of a Gaussian mixture depend only on the relative positions of the component centers, we may translate the coordinate system without loss of generality. Thus, we consider the case
\[
\bm{\mu}_1 = \bm{0},
\quad
\bm{\mu}_2 = \bm{u},
\quad
\bm{\mu}_3 = \bm{v},
\]
where Assumption~(C1) implies that $\bm{u},\bm{v} \in \mathbb{R}^d$ are nonzero and linearly independent. We define
\[
\gu = \|\bm{u}\|_2^2/2,
\quad
\gv = \|\bm{v}\|_2^2/2,
\quad
\psi = \langle \bm{u},\bm{v} \rangle.
\]
Moreover, we may assume without loss of generality that
\begin{align}
\psi > 0.
\label{eq:positivity_of_psi}
\end{align}
This condition means that the angle between $\bm{u}$ and $\bm{v}$ is acute. It entails no loss of generality because, for any three non-collinear centers, one can choose one of them as the reference point so that the two vectors from that point to the other two centers form an acute angle.

Also, searching for critical points of \( \gm(\bm{x}) \) over the entire space \( \mathbb{R}^d \) is inefficient. Since $\partial \gm(\bm{x})/\partial \bm{x}
=
-\sum_j w_j(\bm{x}-\bm{\mu}_j)
\varphi(\bm{x}-\bm{\mu}_j)$, every critical point $\bm{x}_{\star}$ satisfies
\begin{align}
\bm{x}_{\star}
=
\sum_j \rho_j(\bm{x}_{\star})\bm{\mu}_j,
\quad
\rho_j(\bm{x}_{\star})
=
\frac{
w_j \, \varphi(\bm{x}_{\star}-\bm{\mu}_j)
}{
\sum_{k} w_{k} \, \varphi(\bm{x}_{\star}-\bm{\mu}_{k})
}.
\label{eq:critical_point_barycentric}
\end{align}
Under Assumption~(C2), \(w_j>0\) for every \(j\), and the Gaussian
density is strictly positive. Hence $\rho_j(\bm{x}_{\star})>0$ for every $j$ and $\sum_j \rho_j(\bm{x}_{\star})=1$. Therefore, to enumerate all the critical points, it suffices to restrict the search to the relative interior of the convex hull of the set $\{\bm{\mu}_1,\bm{\mu}_2,\bm{\mu}_3\}=\{\bm{0},\bm{u},\bm{v}\}$, as stated in Lemma~\ref{lem:modes_in_span}.

\begin{lemma}
\label{lem:modes_in_span}
Every critical point of $\gm(\bm{x})$ lies within the set 
$\setH := \{\alpha \bm{u} + \beta \bm{v} \mid (\alpha, \beta) \in \simplex_2\}$, where $\simplex_2:=\{(\alpha,\beta) \mid \alpha>0,\beta>0, \alpha+\beta<1\}$ is an open $2$-simplex.
\end{lemma}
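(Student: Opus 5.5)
The statement follows directly from the barycentric fixed-point identity \eqref{eq:critical_point_barycentric}. The plan is to express each critical point in the coordinates $\bm{\mu}_1=\bm{0}$, $\bm{\mu}_2=\bm{u}$, $\bm{\mu}_3=\bm{v}$ fixed in Section~\ref{subsec:estimating_equation}, and then read off the coefficients.

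First, let $\bm{x}_\star$ be any critical point, so $\partial\gm(\bm{x}_\star)/\partial\bm{x}=\bm{0}$. Expanding $-\sum_j w_j(\bm{x}_\star-\bm{\mu}_j)\varphi(\bm{x}_\star-\bm{\mu}_j)=\bm{0}$ and dividing by the strictly positive quantity $\sum_k w_k\varphi(\bm{x}_\star-\bm{\mu}_k)=\gm(\bm{x}_\star)$ gives \eqref{eq:critical_point_barycentric}. This quantity is positive because of Assumption~(C2) and the positivity of $\varphi$. Substituting the centers yields
\[
\bm{x}_\star=\rho_1(\bm{x}_\star)\,\bm{0}+\rho_2(\bm{x}_\star)\,\bm{u}+\rho_3(\bm{x}_\star)\,\bm{v}
=\alpha\bm{u}+\beta\bm{v},
\]
with $\alpha:=\rho_2(\bm{x}_\star)$ and $\beta:=\rho_3(\bm{x}_\star)$.

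Second, I check that $(\alpha,\beta)\in\simplex_2$. Each $\rho_j(\bm{x}_\star)$ is a ratio of strictly positive numbers under (C2), so $\alpha>0$ and $\beta>0$. The weights also satisfy $\sum_j\rho_j=1$, which gives $\alpha+\beta=1-\rho_1(\bm{x}_\star)<1$, since $\rho_1(\bm{x}_\star)>0$ as well. Hence $\bm{x}_\star\in\setH$.

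I do not expect any real obstacle here. The only points that need care are that the denominator in \eqref{eq:critical_point_barycentric} never vanishes and that the strict inequalities depend on (C2). Assumption (C1) is not needed for the containment itself. It only ensures, through linear independence of $\bm{u}$ and $\bm{v}$, that the pair $(\alpha,\beta)$ representing $\bm{x}_\star$ is unique. I would note this uniqueness, since the later reduction to a two-variable problem on $\simplex_2$ relies on it.
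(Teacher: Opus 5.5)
Your proposal is correct and follows the paper's own argument: the paper derives the fixed-point identity $\bm{x}_\star=\sum_j\rho_j(\bm{x}_\star)\bm{\mu}_j$ from the gradient, uses (C2) to get $\rho_j>0$ with $\sum_j\rho_j=1$, and concludes that $\bm{x}_\star$ lies in the relative interior of the convex hull of $\{\bm{0},\bm{u},\bm{v}\}$, which is $\setH$. Your identification $\alpha=\rho_2(\bm{x}_\star)$, $\beta=\rho_3(\bm{x}_\star)$, and your remark that (C1) is needed only for uniqueness of $(\alpha,\beta)$, match how the paper uses the lemma afterwards.
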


Since $\bm{u}$ and $\bm{v}$ are linearly independent, $\setH$ is a subset of the $2$-dimensional plane embedded in $\mathbb{R}^d$. 
It is therefore sufficient to analyze essentially a \( 2 \)-dimensional Gaussian mixture in place of the original \( d \)-dimensional one. 
Consequently, all the critical points can be expressed as
\begin{align}
\bm{x} = \alpha \bm{u} + \beta \bm{v} \in \setH,
\label{eq:x_linear}
\end{align}
where $(\alpha,\beta) \in \simplex_2$ are parameters. 
By virtue of the linear independence of $\bm{u}$ and $\bm{v}$, the expression \eqref{eq:x_linear} is unique: $\alpha \bm{u} + \beta \bm{v}=\alpha' \bm{u} + \beta' \bm{v}$ if and only if $(\alpha,\beta)=(\alpha',\beta')$.

The expression \eqref{eq:x_linear} corresponds to the ridgeline representation in Corollary~1 of \citet{ray2005topography}, which in turn cites the original work of \citet{perpinan2003number}. While their formulation includes boundary points of the simplex to cover degenerate cases such as vanishing mixture weights, these essentially univariate cases have already been treated separately in Section~\ref{subsec:degeneracy}. We therefore restrict attention to the relative interior of the simplex.

Under this parameterization, the criticality equation \( \partial \gm(\bm{x})/\partial \bm{x} = \bm{0} \) reduces to a system of simultaneous equations in \((\alpha, \beta)\), as presented in Lemma~\ref{lem:critical}.

\begin{lemma}
\label{lem:critical}
The point \( \bm{x} = \alpha \bm{u} + \beta \bm{v} \in \setH \) is a critical point of \( \gm(\bm{x}) \) if and only if \( (\alpha, \beta) \in \simplex_2 \) satisfies the following system of simultaneous equations:
\begin{align}
\dfrac{\alpha}{1-\alpha-\beta}
=
\frac{w_2}{w_1}
\exp\left(L_u(\alpha,\beta)\right),
\quad
\dfrac{\beta}{1-\alpha-\beta}
=
\frac{w_3}{w_1}
\exp\left( L_v(\alpha,\beta)\right),
\label{eq:simultaneous_simpler}
\end{align}
where the exponent functions $L_u,L_v$ are defined as follows:
\begin{align}
L_u(\alpha,\beta)
=
2\alpha \gu + \beta \psi - \gu, 
\quad
L_v(\alpha,\beta)
=
\alpha \psi + 2\beta \gv - \gv.
\label{eq:def_Lu_Lv}
\end{align}
\end{lemma}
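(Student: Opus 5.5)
The plan is to work directly from the barycentric characterization \eqref{eq:critical_point_barycentric}, whose proof is already given in the text preceding Lemma~\ref{lem:modes_in_span}, and then translate it into the coordinates $(\alpha,\beta)$. By Lemma~\ref{lem:modes_in_span}, every critical point has the form $\bm{x}=\alpha\bm{u}+\beta\bm{v}$ with $(\alpha,\beta)\in\simplex_2$. Conversely, a point $\bm{x}\in\setH$ is critical if and only if $\bm{x}=\sum_j\rho_j(\bm{x})\bm{\mu}_j=\rho_2(\bm{x})\bm{u}+\rho_3(\bm{x})\bm{v}$. This equivalence holds because the gradient equals $-\bigl(\sum_k w_k\varphi_k(\bm{x})\bigr)\bigl(\bm{x}-\sum_j\rho_j\bm{\mu}_j\bigr)$ and the prefactor is strictly positive. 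By the uniqueness of the representation \eqref{eq:x_linear}, which follows from the linear independence of $\bm{u}$ and $\bm{v}$, criticality is therefore equivalent to the pair of scalar equations $\alpha=\rho_2(\bm{x})$ and $\beta=\rho_3(\bm{x})$. Since $\rho_1+\rho_2+\rho_3=1$, these also force $1-\alpha-\beta=\rho_1(\bm{x})>0$.

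Next, I would eliminate the normalizing denominator by taking ratios. The pair of equations is equivalent to $\alpha/(1-\alpha-\beta)=\rho_2/\rho_1$ together with $\beta/(1-\alpha-\beta)=\rho_3/\rho_1$. For the forward direction, divide. For the reverse direction, note that given the two ratio equations, the triples $(1-\alpha-\beta,\alpha,\beta)$ and $(\rho_1,\rho_2,\rho_3)$ are proportional and both sum to $1$, so they are equal. All quantities involved are positive on $\simplex_2$, so no division by zero arises.

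The remaining step is to compute the ratios explicitly. We have $\rho_2/\rho_1=(w_2/w_1)\exp\bigl(-\tfrac12\|\bm{x}-\bm{u}\|^2+\tfrac12\|\bm{x}\|^2\bigr)=(w_2/w_1)\exp\bigl(\langle\bm{x},\bm{u}\rangle-\gu\bigr)$. Substituting $\bm{x}=\alpha\bm{u}+\beta\bm{v}$ gives $\langle\bm{x},\bm{u}\rangle=2\alpha\gu+\beta\psi$, so the exponent is exactly $L_u(\alpha,\beta)$. The same computation with $\bm{v}$ gives $\langle\bm{x},\bm{v}\rangle-\gv=\alpha\psi+2\beta\gv-\gv=L_v(\alpha,\beta)$. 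Together these yield \eqref{eq:simultaneous_simpler}.

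There is no real obstacle here. The only point needing care is the two-directional bookkeeping: the ratio system must be shown to be equivalent to, not merely implied by, criticality, and the restriction $(\alpha,\beta)\in\simplex_2$ is what makes the ratio step reversible.
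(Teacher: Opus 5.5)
Your proposal is correct and follows the paper's proof essentially step for step. The shared steps are the fixed-point form of the gradient, linear independence of $\bm{u},\bm{v}$ to identify $\rho_2=\alpha$ and $\rho_3=\beta$, passage to the ratios $\rho_2/\rho_1$ and $\rho_3/\rho_1$, and the explicit exponent computation. Your proportional-triples argument spells out the converse direction (the ratio system forces $(\rho_1,\rho_2,\rho_3)=(1-\alpha-\beta,\alpha,\beta)$), which the paper compresses into the single word ``equivalent''.
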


\begin{proof}[Proof of Lemma~\ref{lem:critical}] 
Let $\bm{x} = \alpha \bm{u}+\beta \bm{v}$. 
Since every critical point satisfies the fixed-point equation~\eqref{eq:critical_point_barycentric}, 
the parametrization $\bm{\mu}_1=\bm{0}$, $\bm{\mu}_2=\bm{u}$, and $\bm{\mu}_3=\bm{v}$ gives
\[
    \bm{x}= \rho_2(\bm{x}) \bm{u}+\rho_3(\bm{x}) \bm{v}. 
\]
Using the linear independence of $\bm{u}$ and $\bm{v}$, we obtain $\rho_2(\bm{x})=\alpha$, $\rho_3(\bm{x})=\beta$, and $\rho_1(\bm{x})=1-\alpha-\beta$. Thus, the criticality condition is equivalent to
\begin{align}
\frac{\alpha}{1-\alpha-\beta}
=
\frac{\rho_2(\bm{x})}{\rho_1(\bm{x})}
=
\frac{w_2}{w_1}
\frac{\varphi(\bm{x}-\bm{u})}{\varphi(\bm{x})},
\qquad
\frac{\beta}{1-\alpha-\beta}
=
\frac{\rho_3(\bm{x})}{\rho_1(\bm{x})}
=
\frac{w_3}{w_1}
\frac{\varphi(\bm{x}-\bm{v})}{\varphi(\bm{x})}.
\label{eq:critical_point_weights_ratio}
\end{align}
Since $\varphi(\bm{x}) \propto \exp(-\|\bm{x}\|_2^2/2)$, we have 
\[
\frac{\varphi(\bm{x}-\bm{u})}{\varphi(\bm{x})}
=
\exp\left(
\frac{\|\bm{x}\|_2^2-\|\bm{x}-\bm{u}\|_2^2}{2}
\right)
=
\exp\left(2\alpha\gu+\beta\psi-\gu\right)
=
\exp(L_u(\alpha,\beta)),
\]
and similarly, $\varphi(\bm{x}-\bm{v})/\varphi(\bm{x})=\exp\left(\alpha\psi+2\beta\gv-\gv\right)=\exp(L_v(\alpha,\beta))$. Substituting these expressions into \eqref{eq:critical_point_weights_ratio} proves the assertion. 
\end{proof}

\subsection{Recasting the critical points as intersections of two curves}
\label{subsec:recasting_problems}

To solve the above simultaneous equations \eqref{eq:simultaneous_simpler}, we employ a bijective map 

\begin{align}
    \simplex_2 \ni (\alpha,\beta) 
    \,
    \mapsto
    \,
    (s,t) 
    =
    \left( 
    \log \frac{\alpha}{1-\alpha-\beta}, \,
    \log \frac{\beta}{1-\alpha-\beta}
    \right) 
    \in 
    \mathbb{R}^2,
\label{eq:ab_to_st}
\end{align}
which also has an inverse map
\begin{align}
\mathbb{R}^2 \ni (s,t) 
\,
\mapsto 
\,
(\alpha,\beta)
=
\left(
\frac{e^s}{1+e^s+e^t}, \, 
\frac{e^t}{1+e^s+e^t}
\right)
\in \simplex_2. 
\label{eq:st_to_ab}
\end{align}
Using the above reparameterization and taking the logarithm, the equations \eqref{eq:simultaneous_simpler}, together with \eqref{eq:def_Lu_Lv}, then can be equivalently written as
\begin{align}
P(s,t) := a_0(s) + a_1(s) e^{s} + a_2(s) e^{t} = 0,
\quad 
Q(s,t) := b_0(t) + b_1(t) e^{t} + b_2(t) e^{s} = 0,
\label{eq:reduced_equations}
\end{align}
where 
$\kappa_u = \log(w_1/w_2), \kappa_v=\log(w_1/w_3)$ are log-weight ratios, $a_0(s) = s+\gu+\ku$, 
$a_1(s) = s-\gu+\ku$, 
$a_2(s) = s+\gu+\ku-\psi$ are affine in $s$, and 
$b_0(t) = t+\gv+\kv$, 
$b_1(t) = t-\gv+\kv$, 
$b_2(t) = t+\gv+\kv-\psi$ are affine in $t$. 
See Appendix~\ref{app:derivation} for the derivation in detail. 

We denote by 
\begin{align}
\critset 
:= 
\left\{ (s,t) \in \mathbb{R}^2 \mid P(s,t) = Q(s,t)=0 \right\}
\label{eq:critset}
\end{align}
the set of all critical points expressed in the $(s,t)$-coordinates. 
Because the 
reparameterization $(\alpha,\beta)\mapsto(s,t)$ is a bijection between $\simplex_2$ 
and $\mathbb{R}^2$, and $\bm{x}=\alpha \bm{u}+\beta \bm{v}$ is in turn a bijection onto $\setH$, 
counting the critical points (including modes) of $\gm$ is equivalent to analyzing $\critset$. Geometrically, 
$\critset$ is the intersection of the two planar curves $\curves_P:=\{(s,t) \in \mathbb{R}^2 \mid P(s,t)=0\}$ and $\curves_Q:=\{(s,t) \in \mathbb{R}^2 \mid Q(s,t)=0\}$. Namely, 
\[
    \critset = \curves_P \cap \curves_Q.
\]
The curves $\curves_P, \curves_Q$ and the set of critical points $\critset$ for several settings are illustrated in Figure~\ref{fig:curves}.

\begin{figure}[!ht]
\centering 
\begin{minipage}{0.3\textwidth}
\centering 
\includegraphics[width=0.8\textwidth]{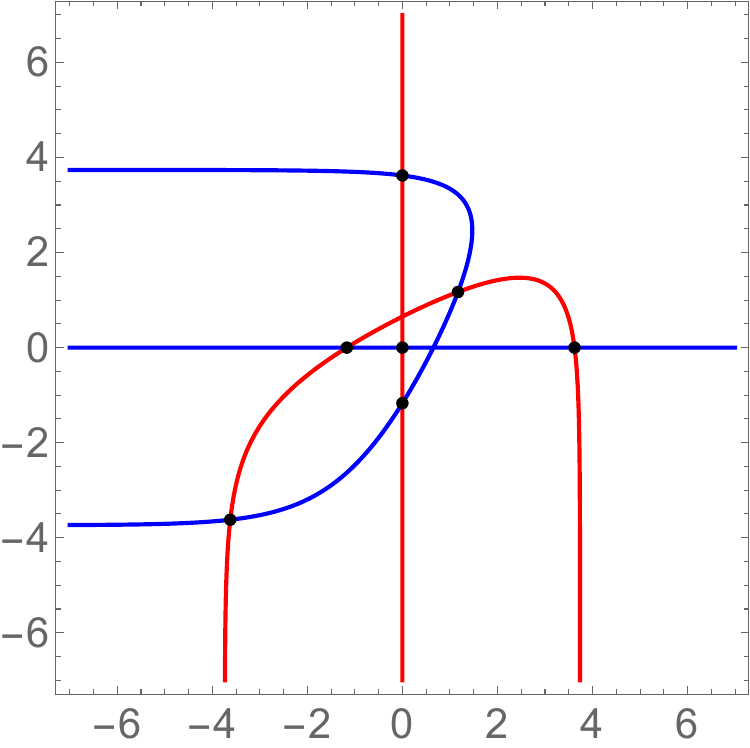}
\subcaption{$\gu=\gv=\psi=3.92$}
\label{fig:curves1}
\end{minipage}
\begin{minipage}{0.3\textwidth}
\centering 
\includegraphics[width=0.8\textwidth]{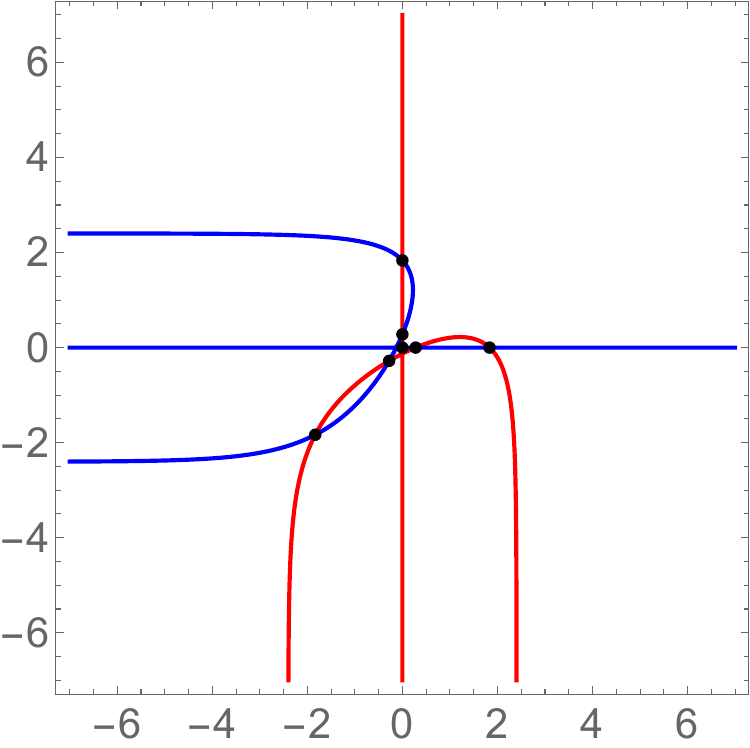}
\subcaption{$\gu=\gv=\psi=2.88$}
\label{fig:curves2}
\end{minipage}
\begin{minipage}{0.3\textwidth}
\centering 
\includegraphics[width=0.8\textwidth]{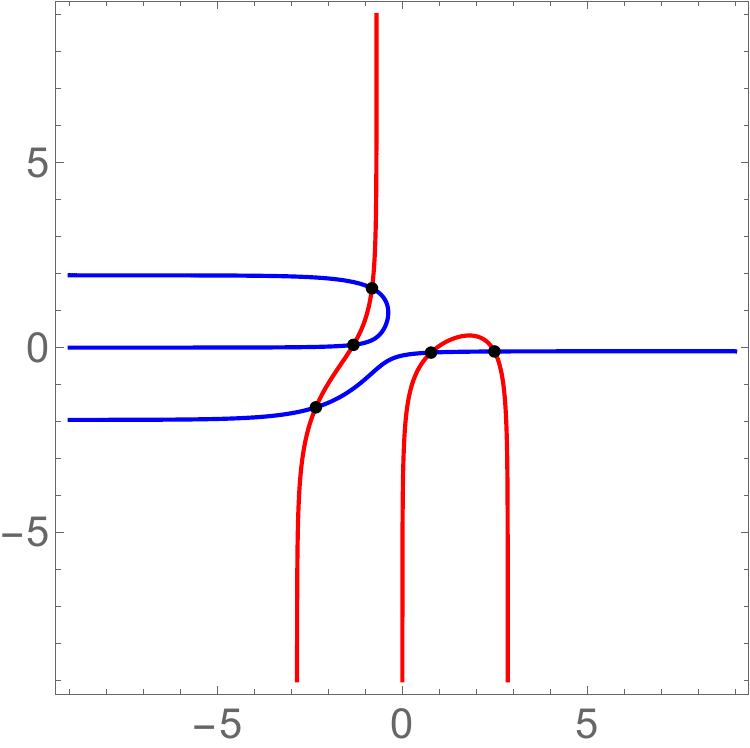}
\subcaption{$\gu=3.2, \gv=2.6, \psi=2.5$}
\label{fig:curves3}
\end{minipage}
\caption{
The curves $\curves_P$ (red) and $\curves_Q$ (blue) in the $(s,t)$-plane; their intersections $\critset$ correspond to the critical points of the density $\gm$. 
(\subref{fig:curves1}) Equilateral, equal-weight configuration with $\gu=\gv=\psi=3.92$: the central intersection at the origin is a critical point but not a local maximum, so no ghost mode is present. 
(\subref{fig:curves2}) Equilateral, equal-weight configuration with $\gu=\gv=\psi=2.88$ (Duistermaat configuration): the central intersection at the origin is now a local maximum, i.e. the ghost mode. 
(\subref{fig:curves3}) A configuration away from the equilateral case, $\gu=3.2, \gv=2.6, \psi=2.5$: in this particular asymmetric configuration, $(s,t)=(0,0)$ is no longer an intersection, and the central mode observed in panel (\subref{fig:curves2}) is not present. In all panels $w_1=w_2=w_3$, i.e., $\ku=\kv=0$. 
}
\label{fig:curves}
\end{figure}

\medskip

Although this direction is somewhat tangential to the main argument of the present paper, it is worth mentioning another possible representation of the equations. If $s$ is fixed, the equation $P(s,t)=0$ can be solved explicitly for $t$ as $t=\log \{(a_0(s)+a_1(s)e^s)/(-a_2(s))\}$, whenever the right-hand side is well defined. Conversely, if $t$ is fixed, solving $P(s,t)=0$ for $s$ leads to an expression in terms of a generalized $W$ function~\citep{mezo2017generalization}, which further generalizes the Lambert $W$ function~\citep{corless1996lambert,mezo2022lambert}. The Lambert $W$ function is closely related to the Pfaffian structure used in \citet{amendola2019maximum} and \citet{nguyen2026bounds}; see, for example, \citet{khovanskii1980class} for the concept of the Pfaffian structure. Such Pfaffian structures also allow one to bound the number of zeros of related functions, as studied by \citet{gabrielov2004Complexity}, building on Khovanskii's fewnomial theory~\citep{khovanskii1991fewnomials}. However, this route, employed in \citet{amendola2019maximum} and \citet{nguyen2026bounds}, is designed for substantially broader classes of mixtures; their direct application to the specific equations considered here may lead to rather loose estimates. For this reason, the present paper follows a different route, avoiding direct reliance on the general Pfaffian framework and instead exploiting the special exponential-polynomial structure of the reduced equations (Lemma~\ref{lem:exp_zeros}).

\subsection{Counting the number of intersections}
\label{subsec:counting_the_number_of_intersections}

We now bound the number of intersections of the curves $\curves_P$ and $\curves_Q$. This yields an upper bound on the number of critical points. 

Set $s_{\dagger}:=\psi-\gu-\ku$ and define
\[
N(s):=a_0(s)+a_1(s)e^s, \quad D(s):=-a_2(s)=s_{\dagger}-s,
\]
such that $P(s,t)=N(s)-D(s)e^t$. In what follows, we first distinguish the two cases $N(s_{\dagger})\ne0$ and $N(s_{\dagger})=0$. The distinction is needed because, in the case $N(s_{\dagger})=0$, the zero-counting problem develops a higher-order degeneracy at $s=s_{\dagger}$. 

In each case, we further divide the solutions according to whether $D(s)=0$. Since $D(s)=s_{\dagger}-s$, this is the same as separating the solutions off the line $s=s_{\dagger}$ from those lying on it.

\paragraph{Case A: $N(s_{\dagger}) \ne 0$}

\begin{enumerate}[{(1)}]
\item \textbf{Solutions with $D(s) \ne 0 \quad (\Leftrightarrow s \ne s_{\dagger})$} \\
In this case, $P(s,t)=0$ determines $t$ uniquely as
$t=g(s):=\log\{N(s)/D(s)\}$. This expression is defined precisely on
\[
U:=\left\{s \in \mathbb{R} \, \bigg| \, \frac{N(s)}{D(s)}>0, \, D(s) \ne 0\right\}.
\]

\paragraph{Strategy:} 
The idea is to reduce the intersection problem for the two curves $P(s,t)=0$ and $Q(s,t)=0$ to a univariate zero-counting problem. In Step~1, substituting the solution $t=g(s)$ of the equation $P(s,t)=0$ (defined over $U$) into $Q(s,t)=0$ gives a real-valued function $F:U\to\mathbb{R}$ whose zeros are in one-to-one correspondence with the critical points under consideration. In Step~2, we show that $U$ has at most two connected components. In Step~3, instead of counting the zeros of $F$ directly, we count the zeros of $F'$. For this purpose, we construct a function $G$ defined on all of $\mathbb{R}$ that agrees with $F'$ on $U$ up to multiplication by nonzero factors. The function $G$ is an exponential polynomial, so Lemma~\ref{lem:exp_zeros} gives an upper bound on the number of its zeros, and hence on the number of zeros of $F'$ in $U$. Finally, in Step~4, Rolle's theorem converts this bound on the zeros of $F'$ into a bound on the zeros of $F$, which gives the desired bound on the number of intersections.

In what follows, we describe the details of each step. 

\begin{itemize}
    \item \textbf{Step 1.} 
    In this step, we prove that the critical points in this case correspond bijectively to the zeros of some function $F(s)$ in $U$.

Since $P(s,t)=0 \Leftrightarrow a_2(s)e^t=-a_0(s)-a_1(s)e^s$, the remaining equation $Q(s,t)=0$ yields
\begin{align}
0
&=
a_2(s)Q(s,t) \nonumber \\
&=
a_2(s)b_0(t)+b_1(t)a_2(s)e^t+a_2(s)b_2(t)e^s \nonumber \\
&=
a_2(s)b_0(t)+b_1(t)\left\{-a_0(s)-a_1(s)e^s\right\}+a_2(s)b_2(t)e^s \nonumber \\
&\overset{(\star)}{=}
\bigl\{
\underbrace{a_2(s)b_0(t)-a_0(s)b_1(t)}_{=(\star 1)}\bigr\}
+
\bigl\{
\underbrace{a_2(s)b_2(t)-a_1(s)b_1(t)}_{=(\star 2)}\bigr\}e^s.
\label{eq:equation-qst}
\end{align}
Since each $b_j$ is affine with slope one, we have
$b_j(t)=t+b_j(0)$. Consequently,
\begin{align*}
(\star 1)
=
a_2(s)b_0(t)-a_0(s)b_1(t)
&=
(a_2(s)-a_0(s))t+a_2(s)b_0(0)-a_0(s)b_1(0),\\
(\star 2)
=
a_2(s)b_2(t)-a_1(s)b_1(t)
&=
(a_2(s)-a_1(s))t+a_2(s)b_2(0)-a_1(s)b_1(0).
\end{align*}
Considering $a_2(s)-a_0(s)=-\psi$ and $a_2(s)-a_1(s)=2\gu-\psi$, the
equation \eqref{eq:equation-qst} can be written as 
\[
L(s)t+M(s)=0.
\]
We next show that $L$ has no zero in $U$. Fix $s\in U$ and set
$t=g(s)$. Since 
\begin{align*}
    L(s)
    &=
    \{a_2(s)-a_0(s)\}+\{a_2(s)-a_1(s)\}e^s =
    a_2(s)(1+e^s)
    -a_0(s)-a_1(s)e^s
    =
    -D(s)(1+e^s)-N(s),
\end{align*}
we have
\begin{align*}
\frac{L(s)}{D(s)}
=
-\left( 1+e^s+\frac{N(s)}{D(s)} \right) <0,
\end{align*}
as $s \in U$ indicates $N(s)/D(s)>0$. Therefore, $L(s)\ne0$ throughout $U$.

Since multiplication by
$a_2(s)=-D(s)\ne0$ preserves the equation $Q(s,t)=0$, the above
calculation shows that
\[
Q(s,g(s))=0
\quad\Longleftrightarrow\quad
L(s)g(s)+M(s)=0
\quad\Longleftrightarrow\quad
\underbrace{g(s) + \frac{M(s)}{L(s)}}_{=: \, F(s)} = 0.
\]
Thus the critical points in Case A correspond bijectively to the
zeros of $F$ in $U$.

\item \textbf{Step 2.} 
In this step, we prove that $U$ has at most two connected components. 
Since $N(s)=a_0(s)+a_1(s)e^s$ and both polynomials $a_0,a_1$ are affine,
Lemma~\ref{lem:exp_zeros} shows that $N$ has at most three real zeros,
counted with multiplicity. The function $D(s)=s_{\dagger}-s$ has exactly
one simple zero. Hence $N(s)D(s)$ has at most four real zeros, counted
with multiplicity.

Moreover, $N(s)\to-\infty$ as $s\to-\infty$ and
$N(s)\to+\infty$ as $s\to+\infty$, whereas
$D(s)\to+\infty$ as $s\to-\infty$ and
$D(s)\to-\infty$ as $s\to+\infty$. Thus
$N(s)D(s)\to-\infty$ at both ends. Since
\[
U=\left\{s \in \mathbb{R} \, \mid \, N(s)D(s)>0\right\},
\]
it follows that $U$ has at most two connected
components. Indeed, each component has two boundary incidences at
zeros of $N(s)D(s)$, and a zero bordering two positive components has even
multiplicity and is counted at least twice.

\item \textbf{Step 3.} In this step, we count the zeros of the derivative $F'$. 
To simplify the calculation, define the real-analytic function on the whole real line:
\[
G(s)
:=
\{N'(s)D(s)-N(s)D'(s)\}L(s)^2
+
N(s)D(s)
\{L(s)M'(s)-L'(s)M(s)\},
\]
and it coincides with $F'(s)N(s)D(s)L(s)^2$ over $U$. 
Since $N$, $D$, and $L$ are nonzero on $U$, we have
$F'(s)=0$ if and only if $G(s)=0$ there. 
Collecting equal exponential terms in the displayed expression for
$G$ gives
\[
G(s)=\sum_{j=0}^3 g_j(s) \exp(j s),
\]
where the degrees of these polynomials $g_0,g_1,g_2,g_3$ satisfy
\[
\deg g_0 \le 2,
\quad
\deg g_1 \le 3,
\quad 
\deg g_2 \le 3, 
\quad 
\deg g_3 \le 2,
\]
where the coefficients of the leading terms (i.e., 2nd, 3rd, 3rd, and 2nd order terms) in $g_0,g_1,g_2,g_3$ are $2\gv \psi,4\gu \gv-\psi^2,4\gu\gv-\psi^2,-2(2\gu-\psi)(\gu+\gv-\psi)$, respectively. It shows that the coefficient of $s^2$ in $g_0$ is $2\gv\psi>0$, indicating that $g_0\not\equiv0$, and in particular $G\not\equiv0$. 
The direct algebraic expansion of each polynomial $g_j(s)$ is shown in Supplement S.1. 

Thus, Lemma~\ref{lem:exp_zeros} gives
\begin{align}
\#\{s \ne s_{\dagger} \, \mid \, F'(s)=0\}
&\le
\#\{s \in \mathbb{R} \, \mid \, G(s)=0\} \nonumber \\
&\le
\sum_{j=0}^{3} (\deg g_j+1) -1 
\le 
(3+4+4+3)-1
=
13,
\label{eq:zeros_of_G}
\end{align}
where the zeros are counted with multiplicity.

\item \textbf{Step 4.} In this step, we count the zeros of $F$ by applying Rolle's theorem. In any one connected component of $U$, any
$m$ distinct zeros of $F$ produce at least $m-1$ distinct zeros of
$F'$, and hence of $G$, between them. 
Since $U$ has at most two connected components as shown in Step 2, and $F'$ has at most $13$ zeros, Rolle's theorem indicates that any finite
collection of zeros of $F$ in $U$ has at most $13+2=15$ elements. 
\end{itemize}

Consequently, the entire zero set is finite and 
\[
\#\{(s,t)\in\critset \mid s\ne s_{\dagger}\}\le15.
\]

\item \textbf{Solutions with $D(s) = 0  \quad (\Leftrightarrow s = s_{\dagger})$}

Since $D(s_{\dagger})=s_{\dagger}-s_{\dagger}=0$, the equation $P(s_{\dagger},t)=0$ leads to $N(s_{\dagger})=0$ and is independent of $t$. If $N(s_{\dagger})\ne0$, there is no critical point on the line $s=s_{\dagger}$. Namely, 
\[
\#\{(s,t)\in\critset \mid s =  s_{\dagger}\} = 0.
\]

\end{enumerate}

\paragraph{Case B: $N(s_{\dagger}) = 0$}

\begin{enumerate}[{(1)}]
\item \textbf{Solutions with $D(s) \ne 0 \quad (\Leftrightarrow s \ne s_{\dagger})$}

Following Case A(1), we next consider the case $N(s_{\dagger})=0$. Since
$D(s_{\dagger})=s_{\dagger}-s_{\dagger}=0$, we can prove that both $L,M$ vanish at $s_{\dagger}$; 
in particular, $L(s_{\dagger})=-D(s_{\dagger})(1+e^{s_{\dagger}})-N(s_{\dagger})=0$ and 
$M(s_{\dagger})=-b_1(0)\{a_0(s_{\dagger})+a_1(s_{\dagger})e^{s_{\dagger}}\}=-b_1(0)N(s_{\dagger})=0$.

Put $\delta=s-s_{\dagger}$. Since $N,D,L$, and $M$ vanish at $s_{\dagger}$, there
exist real-analytic functions $\widetilde{N},\widetilde{D}\widetilde{L}$,
and $\widetilde{M}$ such that
$N=\delta \widetilde{N}$, $D=\delta \widetilde{D}$, $L=\delta \widetilde{L}$, and
$M=\delta \widetilde{M}$. Hence
$s_{\dagger}$ is a zero of 
\begin{align*}
G 
&=
(N'D-ND')L^2+ND(LM'-L'M)
=
\delta^4 \left\{ 
    (\widetilde{N}'\widetilde{D}-\widetilde{N}\widetilde{D}')\widetilde{L}^2
    +
    \widetilde{N}\widetilde{D}(\widetilde{L}\widetilde{M}'-\widetilde{L}'\widetilde{M})
\right\}
\end{align*}
of multiplicity at least four. 
Therefore, when
$N(s_{\dagger})=0$, the zeros of \(G\) in \(\mathbb{R}\setminus\{s_{\dagger}\}\) is at most \(13-4=9\).
In particular, \(G\) has at most \(9\) zeros in \(U\).

The same Rolle argument as in Step 4 of Case A1 then gives
\[
\#\{(s,t)\in\critset \mid s\ne s_{\dagger}\} \le
9+2=11.
\]

\item \textbf{Solutions with $D(s) = 0 \quad (\Leftrightarrow s = s_{\dagger})$}

Suppose that $N(s_{\dagger})=0$. The critical points on this line are then
the zeros in $t$ of $Q(s_{\dagger},t) = \{b_0(t)+b_2(t)e^{s_{\dagger}} \} + b_1(t)e^t$. 
This is an exponential polynomial with exponents $0$ and $1$, whose
corresponding polynomials have degree at most one. Moreover,
$b_1(t)=t-\gv+\kv$ is not identically zero. Hence
Lemma~\ref{lem:exp_zeros} shows that the line $s=s_{\dagger}$ contains at
most $(2+2)-1=3$ critical points. Namely, 
\[
\#\{(s,t)\in\critset \mid s =  s_{\dagger}\} \le 3.
\]

\end{enumerate}

It remains only to combine the estimates. Case A contributes at most $15$ critical points: subcase (1) gives at most $15$, and subcase (2) gives none. Case B contributes at most $14$ critical points: subcase (1) gives at most $11$, and subcase (2) gives at most $3$. Therefore,
\[
\#\critset
\le
\max\{\, \underbrace{15+0}_{\text{Case A}} \, , \, \underbrace{11+3}_{\text{Case B}} \, \}
=
15.
\]
This proves Theorem~\ref{theo:bound_critical_points}. \qed

\section{Conclusion}
\label{sec:conclusion}

This paper investigated the number of modes of three-component homoscedastic Gaussian mixtures. Existing general upper bounds are conditional on an a priori finiteness assumption and remain loose even in this simple setting, yielding bounds of $42592$ and $72$. We proved, without imposing any such additional assumption, that every three-component homoscedastic Gaussian mixture has at most $8$ modes.

\medskip \noindent
\textbf{Conjecture:} A natural next question is whether the bound obtained in this paper can be further sharpened. It is conjectured that the maximum number of modes of a general three-component Gaussian mixture is at most $6$ in ambient dimension $d=2$ (see \citet{amendola2019maximum} for historical context). In heteroscedastic settings, examples with $6$ modes can indeed be constructed rather easily~(see, e.g., \citet{amendola2019maximum} Figure~4). By contrast, our numerical experiments suggest that the homoscedastic case may be substantially more rigid; across a wide range of configurations, we did not find any example with more than $4$ modes. This observation leads us to conjecture that the sharp maximum in the three-component homoscedastic setting is $4$. Proving this sharper bound remains an important direction for future work.

\section*{Acknowledgements}

A. Okuno was supported by JSPS KAKENHI Grant Numbers 21K17718, 22H05106, and 25K03087. Y. Kabata was supported by  JSPS KAKENHI Grant Numbers 25H01485 and 25K00208. 
The authors thank Ryoya Fukasaku for helpful discussions. The authors also used Claude Opus 4.8, Claude Fable 5, ChatGPT-5.5 Pro, and ChatGPT-5.6 Sol Pro for mathematical discussion and language assistance.

\appendix

\section[Derivation of P(s,t) and Q(s,t)]{Derivation of \(P(s,t)\) and \(Q(s,t)\)}
\label{app:derivation}

Taking the logarithm of the first equation in \eqref{eq:simultaneous_simpler} gives
\[
\log \frac{\alpha}{1-\alpha-\beta}
=
\log \frac{w_2}{w_1}
+
L_u(\alpha,\beta)
=
-\ku + 2\alpha \gu + \beta \psi - \gu.
\]
Using the reparameterization \eqref{eq:ab_to_st} and its inverse \eqref{eq:st_to_ab}, this becomes
\[
s
+
\ku
-
2\frac{e^s}{1+e^s+e^t}\gu
-
\frac{e^t}{1+e^s+e^t}\psi
+
\gu
=
0.
\]
Multiplying both sides by $1+e^s+e^t$ yields $(s+\gu+\ku)(1+e^s+e^t) - 2e^s\gu - e^t\psi = 0$.
Rearranging the terms, we obtain
\[
(s+\gu+\ku)
+
(s-\gu+\ku)e^s
+
(s+\gu+\ku-\psi)e^t
=
0.
\]
Thus the left-hand side is precisely $P(s,t)=a_0(s)+a_1(s)e^s+a_2(s)e^t$. 
The equation $Q(s,t)=0$ is obtained in the same way from the second equation in \eqref{eq:simultaneous_simpler}.
\qed

\bibliographystyle{apalike}
\bibliography{GM}

@mastersthesis{wallace2013critical,
  author  = {Wallace, Benjamin},
  title   = {On the Critical Points of Gaussian Mixtures},
  school  = {Queen's University},
  address = {Kingston, Ontario, Canada},
  year    = {2013},
  type    = {M.Sc. thesis},
  note    = {Department of Mathematics and Statistics; listed by the department as an M.Sc. Project}
}

@phdthesis{amendola2017algebraic,
  author = {Am{\'e}ndola, Carlos},
  title  = {Algebraic Statistics of Gaussian Mixtures},
  school = {Technische Universit{\"a}t Berlin},
  year   = {2017},
  type   = {PhD thesis},
  doi    = {10.14279/depositonce-6557}
}

@book{coppel1971disconjugacy,
  author    = {William A. Coppel},
  title     = {Disconjugacy},
  series    = {Lecture Notes in Mathematics},
  volume    = {220},
  publisher = {Springer-Verlag},
  address   = {Berlin},
  year      = {1971}
}

@book{karlin1966tchebycheff,
  author    = {Samuel Karlin and William J. Studden},
  title     = {{Tchebycheff Systems: With Applications in Analysis and Statistics}},
  series    = {Pure and Applied Mathematics},
  volume    = {15},
  publisher = {Interscience Publishers},
  address   = {New York},
  year      = {1966}
}

@misc{nguyen2026bounds,
      title={Bounds on the Number of Modes of a Gaussian Mixture Density}, 
      author={Hien Duy Nguyen},
      year={2026},
      eprint={2605.15531v1},
      archivePrefix={arXiv},
      note={\url{https://arxiv.org/abs/2605.15531v1}}
}

@article{mezo2017generalization,
  title={On the generalization of the Lambert $W$ function},
  author={István Mező and Baricz, {\'A}rp{\'a}d},
  journal={Transactions of the American Mathematical Society},
  volume={369},
  number={11},
  pages={7917--7934},
  year={2017}
}

@article{ray2012upper,
title = {On the upper bound of the number of modes of a multivariate normal mixture},
journal = {Journal of Multivariate Analysis},
volume = {108},
pages = {41-52},
year = {2012},
author = {Surajit Ray and Dan Ren}
}

@article{kabata2025singularities,
  title={Singularities in bivariate normal mixtures},
  author={Kabata, Yutaro and Matsumoto, Hirotaka and Uchida, Seiichi and Ueki, Masao},
  journal={Information Geometry},
  volume={8},
  number={2},
  pages={343--357},
  year={2025},
  publisher={Springer}
}

@book{mclachlan2000finite,
  title={{Finite Mixture Models}},
  author={Geoffrey McLachlan and David Peel},
  series={Wiley Series in Probability and Statistics},
  year={2000},
  publisher={Wiley}
}

@book{mclachlan1988mixture,
  title={Mixture models. Inference and applications to clustering},
  author={McLachlan, Geoffrey J and Basford, Kaye E},
  publisher={Marcel Dekker},
  series={Statistics, textbooks and monographs},
  year={1988}
}

@book{mezo2022lambert,
  title={The {L}ambert $W$ function: {I}ts {G}eneralizations and {A}pplications},
  author={István Mező},
  year={2022},
  publisher={Chapman and Hall/CRC}
}

@inproceedings{gabrielov2004Complexity,
  title={Complexity of computations with Pfaffian and Noetherian functions},
booktitle={Normal forms, bifurcations and finiteness problems in differential equations},
  author={Andrei Gabrielov and Nicolai N. Vorobjov},
  year={2004},
volume={137},
pages={211-250},
series={NATO Science Series II: Mathematics, Physics and Chemistry}, 
publisher={Springer Dordrecht}
}

@book{khovanskii1991fewnomials,
  title={Fewnomials},
  author={Khovanskii, Askold Georgievich},
  series={Translations of Mathematical Monographs},
  volume={88},
  publisher={American Mathematical Society},
  year={1991},
  doi={doi.org/10.1090/mmono/088}
}

@article{khovanskii1980class,
  title={On a class of systems of transcendental equations},
  author={Khovanskii, Askold Georgievich},
  journal={Soviet Mathematics Doklady},
  volume={22},
  number={3},
  pages={762--765},
  year={1980},
  organization={American Mathematical Society}
}

@book{milnor1963morse,
  title={Morse Theory},
  author={John Willard Milnor},
  series={Annals of Mathematics Studies},
  year={1963},
  publisher={Princeton University Press}
}

@article{corless1996lambert,
  title={On the {L}ambert {W} function},
  author={Corless, Robert M and Gonnet, Gaston H and Hare, David EG and Jeffrey, David J and Knuth, Donald E},
  journal={Advances in Computational Mathematics},
  volume={5},
  pages={329--359},
  year={1996},
  publisher={Springer}
}

@article{damon1995local,
title = {Local Morse Theory for Solutions to the Heat Equation and Gaussian Blurring},
journal = {Journal of Differential Equations},
volume = {115},
number = {2},
pages = {368-401},
year = {1995},
author = {James Damon}
}

@article{Lifshitz1990multiresolution,
author = {Lifshitz, L. M. and Pizer, S. M.},
title = {A Multiresolution Hierarchical Approach to Image Segmentation Based on Intensity Extrema},
year = {1990},
publisher = {IEEE Computer Society},
address = {USA},
volume = {12},
number = {6},
issn = {0162-8828},
journal = {IEEE Transactions on Pattern Analysis and Machine Intelligence},
pages = {529–540},
numpages = {12}
}

@techreport{perpinan2003isotropic,
title = "An isotropic Gaussian mixture can have more modes than components",
author = "Carreira-Perpi{\~{n}}{\'a}n, Miguel {\'A}. and Williams, Christopher K. I.",
year = "2003",
month = dec,
day = "1",
language = "English",
type = "Working Paper",
institution = "The University of Edinburgh"
}

@InProceedings{perpinan2003number,
author="Carreira-Perpi{\~{n}}{\'a}n, Miguel {\'A}.
and Williams, Christopher K. I.",
editor="Griffin, Lewis D.
and Lillholm, Martin",
title="On the Number of Modes of a Gaussian Mixture",
booktitle="Scale Space Methods in Computer Vision",
year="2003",
publisher="Springer Berlin Heidelberg",
address="Berlin, Heidelberg",
pages="625--640"
}

@article{ray2005topography,
author = {Surajit Ray and Bruce G. Lindsay},
title = {{The topography of multivariate normal mixtures}},
volume = {33},
year={2005},
journal = {The Annals of Statistics},
number = {5},
publisher = {Institute of Mathematical Statistics},
pages = {2042 -- 2065}
}

@article{amendola2019maximum,
    author = {Am{\'e}ndola, Carlos and Engstr{\"o}m, Alexander and Haase, Christian},
    title = "{Maximum number of modes of Gaussian mixtures}",
    journal = {Information and Inference: A Journal of the IMA},
    volume = {9},
    number = {3},
    pages = {587-600},
    year = {2019}
}

@article{edelsbrunner2013isotropic, 
year = {2013}, 
title = {{Add Isotropic Gaussian Kernels at Own Risk: More and More Resilient Modes in Higher Dimensions}}, 
author = {Edelsbrunner, Herbert and Fasy, Brittany Terese and Rote, Gunter}, 
journal = {Discrete \& Computational Geometry}, 
pages = {797--822}, 
number = {4}, 
volume = {49}
}

\clearpage
\includepdf[pages=-]{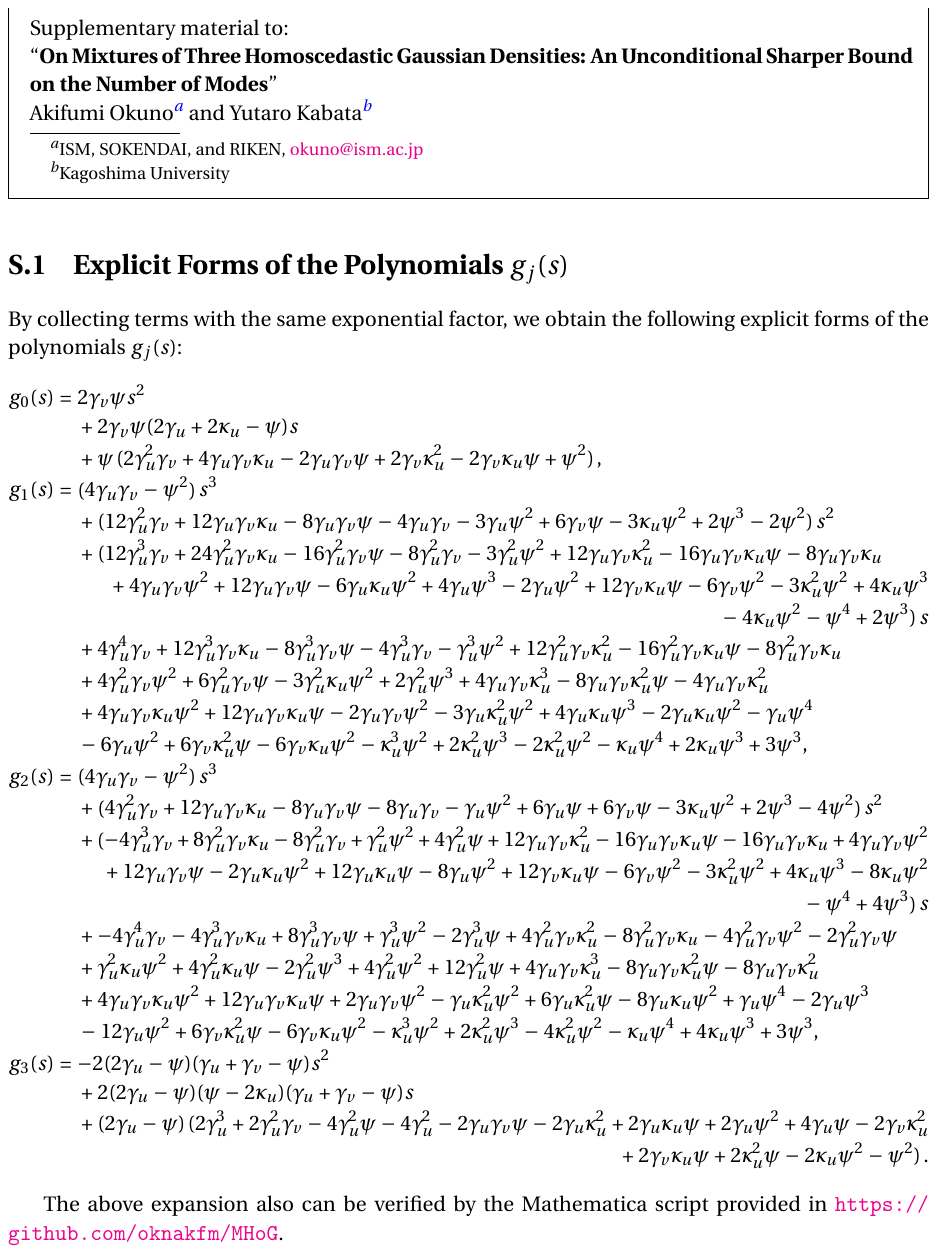}

\end{document}